\newtheorem{thm}{Theorem}[section]
\newtheorem{cor}[thm]{Corollary}
\newtheorem{lem}[thm]{Lemma}
\newtheorem{prop}[thm]{Proposition}
\newtheorem{rem}[thm]{Remark}
\newtheorem{defn}[thm]{Definition}
\newcommand{\R}{{\mathbb{R}}}
\begin{document}

\parindent 0pc
\parskip 6pt
\overfullrule=0pt

\title[Pohozaev identity]{On the Pohozaev  identity for  quasilinear  Finsler anisotropic equations}

\author{Luigi Montoro$^{*}$ and Berardino Sciunzi$^{*}$}
\address{$^{*}$Dipartimento di Matematica e Informatica, UNICAL, Ponte Pietro  Bucci 31B, 87036 Arcavacata di Rende, Cosenza, Italy}
\email{montoro@mat.unical.it, sciunzi@mat.unical.it}

\keywords{Pohozaev identity, Finsler anisotropic operator,  quasilinear equations}

\subjclass[2020]{35A23, 35A01}

\maketitle

\begin{abstract}
In this paper we derive the Pohozaev    identity  for quasilinear equations 
\begin{equation}\tag{$E$}\label{eq:p}
-\operatorname{div}(B'(H(\nabla u))\nabla H(\nabla u))=g(x, u) \quad \text {in}\,\, \Omega,
\end{equation}
involving the  anisotropic Finsler operator $-\operatorname{div}(B'(H(\nabla u))\nabla H(\nabla u))$.
In particular, by means   of  fine regularity results on  the vectorial field   $B'(H(\nabla u))\nabla H(\nabla u)$,  we  prove the identity for  weak solutions and in a direct way.
\end{abstract}
\section{Introduction and main results}
In this paper we derive  the  Pohozaev identity for the Finsler anisotropic operator, defined for suitable smooth functions as
\begin{equation}\label{eq:finsleroperator}
\operatorname{div}(B'(H(\nabla u))\nabla H(\nabla u)),\end{equation}
where the functions $ B, H$ are defined  below.

 As well known, in 1965 Pohozaev in his seminal paper \cite{P} considered the following Dirichlet problem
\begin{equation}\label{eq:ppoh}
\begin{cases}
-\Delta u=g(u) & \text{in}\,\, \Omega
\\
u=0 &\text{on}\,\, \Omega,
\end{cases}
\end{equation}
with $\Omega$ a bounded smooth domain of $\mathbb R^N$ and $g$ a continuous function on $\mathbb R$.
He proved  that, for classical   solutions $u\in C^2(\Omega)\cap C^1(\overline \Omega)$ to \eqref{eq:ppoh}, the following identity holds
\begin{equation}\label{eq:ppoh1}
(2-N)\int_{\Omega}ug(u)\, dx+2N\int_{\Omega}G(u)\, dx=\int_{\partial \Omega}|\nabla u|^2(x,\eta)\,dx,
\end{equation}
where $G'=g$ with $G(0)=0$ and $\eta$ is the outward unit normal vector to the boundary $\partial \Omega$.  Important extensions and further developments can be found in the literature. We recall here the first result in the quasilinear setting \cite{pucser1}   where the authors obtained a corresponding identity for smooth $C^2$ extremals of general variational problems, including results for systems and higher order operators. Since in general, e.g. for the $p$-Laplacian, solutions are not of class $C^2$, we may  consider very important the improvement of the results in \cite{pucser1}  obtained in \cite{DMS},  where the authors proved the Pohozaev identity for  $C^1$ solutions  of general quasilinear  Dirichlet problems, by mean of a  suitable and technical approximation argument. See also the contribution in \cite{GV}. Although to our purposes  we could also try to follow the technique of \cite{DMS}, we have to consider the fact that the adaptation to the anisotropic case of such a technique is completely non trivial. On the contrary, by means of fine regularity results, we shall follow a more simple a direct proof.

The main consequence of \eqref{eq:ppoh1} is the nonexistence of nontrivial solution to \eqref{eq:ppoh} when $g$ satisfies suitable assumptions and $\Omega$ is star-shaped with  respect to the origin. This is quite a delicate issue that we will discuss for the anisotropic case in Section 2. The typical nonexistence result for the critical case is contained in  Corollary \ref{cor:pohozaev}.

From now on  we   consider the following quasilinear anisotropic elliptic equation
\begin{equation}\tag{$E$}\label{eq:p}
-\operatorname{div}(B'(H(\nabla u))\nabla H(\nabla u))=g(x, u) \quad \text {in}\,\, \Omega,
\end{equation}
where $\Omega\subseteq\mathbb R^N$ is a  smooth domain  and   $g$ is a nonlinearity satisfying the following assumption
 \begin{itemize}
 \item[$(h_g)$]
 $g:\overline\Omega\times \mathbb R\rightarrow R$ is a $C^1$ function on the domain $\overline \Omega\times \mathbb R$.
\end{itemize}	
From the mathematical point of view, the anisotropy is responsible for a more richer geometric structure than the classical Euclidean case. On the other hand different
 applications come from  several real phenomena where anisotropic media naturally arise.

In all the paper we suppose the following  classical structural assumptions  on the anisotropic operator. We assume that the functions $B$ and $H$ satisfy
\begin{itemize}
	\item[$(h_B)$]
	\begin{enumerate}
		\item[(i)] $B \in C^{3,\beta}_{loc}((0,+\infty)) \cap C^1([0,+\infty))$, with $\beta \in (0,1)$;
		
		\item[(ii)] $B(0)=B'(0)=0$, $B(t), B'(t), B''(t)>0 \; \forall t \in (0,+\infty)$;
		
		\item[(iii)] There exists $p>1$, $\kappa \in [0,1]$, $\gamma > 0$, $\Gamma > 0$ such that
		\begin{equation}\label{hp:boundAboveBelow}
			\begin{split}
				\gamma(\kappa+t)^{p-2}t \leq & B'(t) \leq \Gamma (\kappa+t)^{p-2}t \\
				 \gamma(\kappa+t)^{p-2} \leq & B''(t) \leq \Gamma (\kappa+t)^{p-2}
			\end{split}
		\end{equation}
	     for any $t>0$;
	\end{enumerate}

\

and

\
	\item[$(h_H)$]
\begin{enumerate}
	\item[(i)] $H \in C^{3,\beta}_{loc}(\R^N \setminus \{0\})$ and such that $H(\xi)>0 \quad \forall \xi \in \R^N \setminus \{0\}$;
	
	\item[(ii)] $H(s \xi) = |s| H(\xi) \quad \forall \xi \in \R^N \setminus \{0\}, \, \forall s \in \R$;
	
	\item[(iii)] $H$ is {uniformly elliptic}, that means the set
	$$\mathcal{B}_1^H:=\{\xi \in \R^N  :  H(\xi) < 1\}$$ is uniformly convex, i.e.
	\begin{equation}\label{eq:boyuH}
        \exists \Lambda > 0: \quad \langle D^2H(\xi)v, v \rangle \geq \Lambda |v|^2 \quad \forall \xi \in \partial \mathcal{B}_1^H, \; \forall v \in \nabla H(\xi)^\bot.
	\end{equation}
\end{enumerate}
\end{itemize}
Every anisotropy $H$ having the unit ball  $\mathcal{B}_1^H$ uniformly convex is called uniformly elliptic. Indeed, the second fundamental form of $\partial \mathcal{B}_1^H$ at a point $\xi \in \partial \mathcal{B}_1^H$ is given by
\[\frac{\langle D^2H(\xi)l, v \rangle}{|\nabla H(\xi)|}\quad  \forall \,l,v \in \nabla H(\xi)^\bot.\]
Since $\partial \mathcal{B}_1^H$ is compact, the uniform ellipticity of $H$ is equivalent to ask \eqref{eq:boyuH}. For details we refer to \cite{CFV1,CFV}.

Under our assumptions the natural function space for solutions to equation \eqref{eq:p} is $W^{1,p}(\Omega)\cap L^{\infty}(\Omega)$. Indeed the anisotropic operator $-\operatorname{div}(B'(H(\nabla u))\nabla H(\nabla u))$, when  $\kappa=0$ in $(h_B)$,   becomes degenerate ($p>2$) or singular ($1<p<2$) in the critical set
\begin{equation}\label{eq:criticalsetZ}\mathcal Z=\{x\in \Omega \,:\, \nabla u(x)=0\}\end{equation} and solutions are not classical in general. We remark at this stage  that, as a particular special case, the anisotropic operator  contains  the well known $p$-laplace operator:  as well known \cite{DB,Tolk}, already in this case the optimal regularity of  solutions to quasilinear equations  with the  $-\Delta_p$ operator is  $C^{1,\alpha}$.

In \cite{CFV} the authors show  that it is possible the apply the  results in  \cite{DB,Tolk} to show   that weak solutions to \eqref{eq:p} are $C^{1,\alpha}(\Omega)\cap C^2(\Omega\setminus Z)$ for some $0<\alpha<1$.

We notice that, in general, the optimal regularity for  solutions to this type of problems is known to be $C^{1,\alpha}$, see  the recent paper \cite{ACF} about the interior regularity  for weak solutions to anisotropic quasilinear equations. Regarding the regularity of the second derivatives  we mention that first results have been obtained in \cite{CSR}.  In particular it has been shown in \cite{CSR} that the stress field $B'(H(\nabla u))\nabla H(\nabla u)\in W^{1,2}_{\rm loc}(\Omega)$ among some very stronger regularity estimates that lead to the optimal summability of the second derivatives. In the case $B(t)=\frac{t^p}{p}$ the fact that
$H^{p-1}(\nabla u)\nabla H(\nabla u)\in W^{1,2}_{\rm loc}(\Omega)$ has been also proved recently in \cite{ACF} under suitable more general assumptions on the source term. 

Taking into account these facts, the goal of this paper is twofold: on one hand is to get a  Pohozaev identity for quasilinear problems involving the general anisotropic operator \eqref{eq:finsleroperator}; on  the other hand is to get a general Pohozaev identity for $C^1$ weak solutions to \eqref{eq:p} and  then avoid some possible restriction coming eventually from asking more regularity on the solution to  \eqref{eq:p}.  To the best of our knowledge  this identity is new  for quasilinear equations driven by a general Finsler anisotropic operator in  the framework of weak solutions.

We mention  \cite{WZ} where the authors proved a Pohozaev identity for a quasilinear  problem involving the anisotropic $p$-Laplace operator ($B(t)=t^p/p$) and assuming $C^2$ regularity of the solutions.

We are ready now   to say   what we mean for   $C^1$ weak solutions to \eqref{eq:p}. More specifically we give the following
\begin{defn}\label{def:sol}
Let us assume that $(h_g), (h_B)$ and $(h_H)$ hold. Then $u \in C^1(\Omega)$ is a weak solution of \eqref{eq:p}  if
\begin{equation}\label{eq:weakDoublyCritical}
		\int_{\Omega} B'(H(\nabla u))\langle \nabla (H(\nabla u)) , \nabla \varphi \rangle  \, dx= \int_{\Omega} g(x,u)\varphi \, dx \quad \forall \varphi \in C^\infty_c(\Omega).
\end{equation}
\end{defn}
In the following we use $\eta(x)=(\eta_1,\ldots,\eta_n)(x)$ to denote the unit outward normal vector to $\partial \Omega$ at a point $x\in \partial \Omega$. We will also use the notation \[u_\eta(x):=\frac{\partial u(x)}{\partial \eta}.\]
Trough  the paper  we  define the functions $G$ and $G_{x_i}$ on the domain $\overline \Omega \times \mathbb R$ as
\begin{equation}\label{eq:G}
G(x,t)=\int_0^t g(x,s)\, ds \qquad G_{x_i}(x,t)=\int_0^t g_{x_i}(x,s)\, ds,
\end{equation}
where $g_{x_i}(x,s)=\partial g(x,s)/ \partial x_i$.

Here below we state our main results
\begin{thm}[Pohozaev identity]\label{thm:pohozaev}
Let us assume that $(h_g), (h_B)$ and $(h_H)$ hold. Let $\Omega$ a bounded smooth domain and let  $u\in C^1(\overline \Omega)$ be a weak solution to \eqref{eq:p}. Then we have
\begin{eqnarray}\label{eq:pohozaev}
&& \int_{\Omega}NG(x,u) + (x\cdot \nabla_x G(x,u))-NB(H(\nabla u)) +B'(H(\nabla u))H(\nabla u) \, dx
\\\nonumber
&&=\int_{\partial \Omega}G(x,u)(x\cdot \eta) -B(H(\nabla u))(x\cdot\eta) +B'(H(\nabla u))(x\cdot \nabla u) (\nabla H(\nabla u)\cdot \eta)\, ds,
\end{eqnarray}
\end{thm}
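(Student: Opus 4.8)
The plan is to derive the Pohozaev identity by the classical Rellich--Pohozaev multiplier method, testing the equation against the field $(x \cdot \nabla u)$, but carried out in the weak framework so that no second derivatives of $u$ are differentiated pointwise. The key structural observation is that the operator is variational: writing $A_i(\xi) = B'(H(\xi)) \, \partial_{\xi_i} H(\xi)$, one has $A_i(\xi) = \partial_{\xi_i}\bigl(B(H(\xi))\bigr)$, so the energy density is $F(\xi) = B(H(\xi))$ with $\nabla_\xi F = A(\xi)$. The homogeneity $H(s\xi) = |s| H(\xi)$ from $(h_H)(ii)$ gives, by Euler's relation, $\langle \nabla H(\xi), \xi \rangle = H(\xi)$, hence $\langle A(\xi), \xi \rangle = B'(H(\xi)) \langle \nabla H(\xi), \xi\rangle = B'(H(\xi)) H(\xi)$. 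This is exactly the divergence-structure term appearing on the left-hand side of \eqref{eq:pohozaev}, which reassures me the right multiplier is being used.

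First I would fix the multiplier $\varphi = (x \cdot \nabla u)$ and record the two integration-by-parts identities I need. The crucial analytic input, which I would quote from the regularity discussion preceding the statement, is that the stress field $V := B'(H(\nabla u))\nabla H(\nabla u) = A(\nabla u)$ lies in $W^{1,2}_{\rm loc}(\Omega)$ (from \cite{CSR}), while $u \in C^1(\overline\Omega)$. This regularity is precisely what makes the otherwise-formal manipulations legitimate: $V$ can be differentiated once in the weak sense, so $\operatorname{div} V = -g(x,u)$ holds in $L^2_{\rm loc}$ and the product $\langle V, \nabla(x\cdot\nabla u)\rangle$ is integrable. The plan is to test the weak formulation \eqref{eq:weakDoublyCritical} against an approximation of $(x\cdot\nabla u)$ and then pass to the limit, since $(x\cdot\nabla u)$ is only $C^0$ and not compactly supported. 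Because $u$ need not vanish on the boundary (we are in the general, not Dirichlet, setting and the stated identity carries a full set of boundary terms), I would localize with a cutoff and a boundary collar rather than assume $\varphi \in C^\infty_c$.

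The heart of the computation is the ``divergence identity'' for the energy: I want to show
\begin{equation*}
\operatorname{div}\bigl( F(\nabla u)\, x - (x\cdot\nabla u)\, V \bigr) = N F(\nabla u) - B'(H(\nabla u))H(\nabla u) + (x\cdot\nabla u)\, g(x,u),
\end{equation*}
where $F(\nabla u) = B(H(\nabla u))$. Expanding the left side, $\operatorname{div}(F(\nabla u)x) = N F(\nabla u) + x\cdot\nabla_x\bigl(F(\nabla u)\bigr)$ and $\operatorname{div}((x\cdot\nabla u)V) = (x\cdot\nabla u)\operatorname{div}V + \langle V, \nabla(x\cdot\nabla u)\rangle$; the term $\langle V, \nabla(x\cdot\nabla u)\rangle = \langle V, \nabla u\rangle + \langle V, x\cdot\nabla(\nabla u)\rangle$, and the chain rule $x\cdot\nabla_x(F(\nabla u)) = \langle A(\nabla u), x\cdot\nabla(\nabla u)\rangle = \langle V, x\cdot\nabla(\nabla u)\rangle$ cancels the mixed second-derivative block exactly, leaving $\langle V,\nabla u\rangle = B'(H(\nabla u))H(\nabla u)$ and $-(x\cdot\nabla u)\operatorname{div}V = (x\cdot\nabla u)g(x,u)$. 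Integrating over $\Omega$ and applying the divergence theorem converts the left side into the boundary integral $\int_{\partial\Omega}\bigl(F(\nabla u)(x\cdot\eta) - (x\cdot\nabla u)\langle V,\eta\rangle\bigr)\,ds$, which matches the $-B(H(\nabla u))(x\cdot\eta)$ and $B'(H(\nabla u))(x\cdot\nabla u)(\nabla H(\nabla u)\cdot\eta)$ boundary terms. The remaining term $\int_\Omega (x\cdot\nabla u)g(x,u)\,dx$ is handled separately by writing $(x\cdot\nabla u)g(x,u) = x\cdot\nabla_x\bigl(G(x,u)\bigr) - (x\cdot\nabla_x G(x,u))$ using \eqref{eq:G}, integrating the first piece by parts to produce $-N\int_\Omega G + \int_{\partial\Omega}G(x\cdot\eta)$, and combining everything yields \eqref{eq:pohozaev}.

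I expect the main obstacle to be the rigorous justification of the second-derivative cancellation, since $\nabla(\nabla u)$ does not exist in $L^2$ individually — only the specific combination $\nabla V$ does. The honest route is therefore not to manipulate $\nabla^2 u$ at all, but to rewrite the offending integral $\int_\Omega \langle V, x\cdot\nabla(\nabla u)\rangle\,dx$ as $\int_\Omega \langle V, \partial_{x_j}\nabla u\rangle x_j\,dx$ and integrate by parts moving the derivative onto $V$ and $x_j$ (legitimate because $V \in W^{1,2}_{\rm loc}$ and $u \in C^1$), so that only $\nabla V$ and $\nabla u$ appear. This requires a careful approximation argument with a cutoff $\psi_\varepsilon$ supported near $\partial\Omega$ and an interior exhaustion avoiding the critical set $\mathcal Z$ where the operator degenerates; one must check that the boundary integrals converge to the stated ones and that the integrals over the collar vanish as $\varepsilon \to 0$, using the $L^\infty$-bound on $V$ (which follows from $u \in C^1(\overline\Omega)$ and the growth of $B'$) together with dominated convergence. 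This approximation step, rather than the algebra, is where the $W^{1,2}_{\rm loc}$ regularity of the stress field is indispensable and where the proof earns its rigor.
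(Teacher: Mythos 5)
Your proposal follows essentially the same route as the paper's proof: the multiplier $(x\cdot\nabla u)$, the Euler-relation identity $\langle V,\nabla u\rangle = B'(H(\nabla u))H(\nabla u)$, the chain-rule identification $\langle V,\partial_{x_i}\nabla u\rangle = \partial_{x_i}\bigl(B(H(\nabla u))\bigr)$ that avoids manipulating $D^2u$ on its own, and the $W^{1,2}_{\rm loc}$ regularity of the stress field (Proposition \ref{pro:a.e.}) as the decisive analytic input. The only divergence is technical: where you propose a cutoff/collar approximation with interior exhaustion to justify the boundary integrations by parts, the paper instead invokes the sharp divergence theorem of Lemma \ref{lem:divthm} (from \cite{CT}), valid for vector fields in $C^0(\overline\Omega)$ with distributional divergence in $L^1(\Omega)$, which yields the boundary terms directly and spares the approximation argument.
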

We observe that if we take
$ B(t)={t^p}/{p}$ and $H(\xi)=|\xi|,$
 $p>1$, i.e. the case of the  $p$-Laplacian operator and $g(x,u)=g(u)$,  we recover the   well known classical  Pohozaev identity (see   \cite{DFSV,DMS,GV, MM}) given in the  following equation
\begin{eqnarray*}
 &&N\int_{\Omega}G(u)\, dx-\frac{N-p}{p}\int_{\Omega}|\nabla u|^p \, dx \\\nonumber
&&=\int_{\partial \Omega}\left(G(u)(x\cdot \eta)-\frac1p |\nabla u|^p(x\cdot\eta) + |\nabla u|^{p-2}(x\cdot \nabla u) u_\eta\right ) \, ds.
\end{eqnarray*}
Having proved Theorem \ref{eq:pohozaev} we can exploit it to deduce a result in the whole $\mathbb R^N$. We have the following
\begin{thm}\label{thm:R^N}
Let us assume that $(h_g), (h_B)$ and $(h_H)$ hold and let $u\in C^1(\mathbb{R}^N)$ a weak solution to
\begin{equation}\nonumber
-\operatorname{div}(B'(H(\nabla u))\nabla H(\nabla u))=g(x,u) \quad \text {in}\,\,\mathbb{R}^N,
\end{equation}
with $\kappa=0$ in $(h_B)$. Let us assume that
\begin{equation}\label{eq:radii}
B(H(\nabla u)), G(x,u),x\cdot\nabla_x G(x,u)\in L^1{(\mathbb R^N)}.\end{equation}
Then
\begin{equation}\nonumber
\int_{\mathbb{R}^N}NG(x,u)+ (x\cdot \nabla_x G(x,u))\, dx=\int_{\mathbb{R}^N}NB(H(\nabla u)) -B'(H(\nabla u))H(\nabla u) \, dx.
\end{equation}
\end{thm}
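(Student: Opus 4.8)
The plan is to deduce the global identity from Theorem~\ref{thm:pohozaev} applied on the balls $B_R=\{|x|<R\}$, and then let $R\to\infty$ along a carefully chosen sequence. Since $u\in C^1(\mathbb R^N)$ is a weak solution on $\mathbb R^N$ in the sense of Definition~\ref{def:sol}, its restriction to $\overline{B_R}$ is a $C^1$ weak solution on the bounded smooth domain $B_R$, so \eqref{eq:pohozaev} holds with $\Omega=B_R$. First I would record the elementary consequences of $(h_B)$ with $\kappa=0$ that make the terms manageable: integrating \eqref{hp:boundAboveBelow} gives $\frac{\gamma}{p}t^p\le B(t)\le\frac{\Gamma}{p}t^p$ and $B'(t)t\le\Gamma t^p\le\frac{\Gamma p}{\gamma}B(t)$ for $t>0$. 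Moreover, since $H$ is positive, $1$-homogeneous and even with convex unit ball $\mathcal B_1^H$, it is a norm and hence equivalent to the Euclidean one, $c|\xi|\le H(\xi)\le C|\xi|$, while $\nabla H$ is $0$-homogeneous and therefore bounded, $|\nabla H(\xi)|\le M$ for $\xi\neq0$. In particular $B'(H(\nabla u))H(\nabla u)\le C_1 B(H(\nabla u))$, so by \eqref{eq:radii} every volume integrand in \eqref{eq:pohozaev} is dominated by a fixed $L^1(\mathbb R^N)$ function.

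Next I would analyse the boundary integral on $\partial B_R$, where $\eta=x/R$ and thus $x\cdot\eta=R$. The first two boundary integrands are bounded in absolute value by $R\bigl(|G(x,u)|+B(H(\nabla u))\bigr)$. For the third one I would estimate $|x\cdot\nabla u|\le R|\nabla u|\le \frac{R}{c}H(\nabla u)$ together with $|\nabla H(\nabla u)\cdot\eta|\le M$, so that
\[
\bigl|B'(H(\nabla u))(x\cdot\nabla u)(\nabla H(\nabla u)\cdot\eta)\bigr|\le \frac{M}{c}\,R\,B'(H(\nabla u))H(\nabla u)\le C_2\,R\,B(H(\nabla u)).
\]
Setting $F(x):=|G(x,u)|+B(H(\nabla u))\in L^1(\mathbb R^N)$, the entire boundary integrand is therefore controlled by $C_3\,R\,F(x)$ on $\partial B_R$.

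The crucial step is to choose the radii so that the boundary contribution vanishes. By the coarea (polar coordinates) formula $\int_{\mathbb R^N}F\,dx=\int_0^\infty\psi(r)\,dr<\infty$, where $\psi(r):=\int_{\partial B_r}F\,ds$. If $r\psi(r)$ were bounded below by a positive constant for all large $r$, then $\psi(r)\ge c_0/r$ would force $\int^\infty\psi(r)\,dr=\infty$, a contradiction; hence $\liminf_{r\to\infty}r\psi(r)=0$ and there is a sequence $R_n\to\infty$ with $R_n\psi(R_n)\to0$. Along this sequence the boundary integral in \eqref{eq:pohozaev} is bounded by $C_3\,R_n\psi(R_n)\to0$.

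Finally I would pass to the limit in \eqref{eq:pohozaev} with $\Omega=B_{R_n}$: the boundary side tends to $0$ by the previous step, while the volume side converges to the integral over $\mathbb R^N$ by dominated convergence, using the $L^1$ domination from the first paragraph. This gives
\[
\int_{\mathbb R^N}NG(x,u)+(x\cdot\nabla_x G(x,u))-NB(H(\nabla u))+B'(H(\nabla u))H(\nabla u)\,dx=0,
\]
which is precisely the claimed identity after rearranging. The main obstacle is the third boundary term, whose control requires both the structural estimate $B'(t)t\le C\,B(t)$ and the norm equivalence $H\sim|\cdot|$ with $|\nabla H|\le M$; once it is absorbed into $R\,F(x)$, the radial $\liminf$ argument supplies the vanishing sequence even though no pointwise decay of $\nabla u$ is assumed.
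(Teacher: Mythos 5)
Your proposal is correct and follows essentially the same route as the paper: apply Theorem~\ref{thm:pohozaev} on balls, control the third boundary term via $B'(t)t\le CB(t)$ together with the equivalence of $H$ with the Euclidean norm and the boundedness of $\nabla H$, extract a sequence of radii along which $r_n\int_{\partial B_{r_n}}(B(H(\nabla u))+|G(x,u)|)\,ds\to0$, and conclude by dominated convergence. The only difference is that you spell out the $\liminf$ argument for the existence of that sequence, which the paper leaves implicit.
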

The rest of the paper is devoted to the proof of our main results and other consequences.

\section{The Pohozaev identity}
\noindent {\bf Notation.} Generic fixed and numerical constants will
be denoted by $C$ (with subscript in some case) and they will be
allowed to vary within a single line or formula. \\

The first step to show Theorem \ref{thm:pohozaev} is  recover a stronger formulation for weak  solutions to
\eqref{eq:p}. We do it in the next proposition. We also collect there some regularity results on the weak solution to \eqref{eq:p} that are interesting in itself. We have the  following
\begin{prop}\label{pro:a.e.}
Let us assume that $(h_g), (h_B)$ and $(h_H)$ hold. Let $u\in C^1(\Omega)$ be a weak solution to \eqref{eq:p}. Then\begin{itemize}
\item[$(i)$] The vector field $B'(H(\nabla u))\nabla H(\nabla u)$ satisfies
\begin{equation}\label{eq:W12}
B'(H(\nabla u))\nabla H(\nabla u)\in W^{1,2}_{\rm loc}(\Omega)
\end{equation}
and
\begin{equation}\label{eq:strongsol}
-\operatorname{div}(B'(H(\nabla u))\nabla H(\nabla u))=g(x,u)\,\,  \text{a.e. in }\Omega,\end{equation}
that is $u$  fulfills  \eqref{eq:p} in the classic sense for almost every $x\in \Omega$.
\item[$(ii)$] The weak  solution $u$ has second distributional derivatives and it holds
\[|\nabla u|^{p-2}\nabla u\in W^{1,2}_{\rm loc}(\Omega)\quad \text{and} \quad |\nabla u|^{p-1}\in W^{1,2}_{\rm loc}(\Omega).\]
Moreover
\[u\in W^{2,2}_{\rm loc}(\Omega),\quad \text{if}\,\, 1<p<3\]
and
\[u\in W^{2,m}_{\rm loc}(\Omega),\,\, 1\leq m<{(p-1)}/{(p-2)},\quad\text{if}\,\, p \geq3\quad\text{and}\quad  g(x,s)>0. \]
\end{itemize}
\end{prop}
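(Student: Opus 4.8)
The plan is to rely on the fine regularity theory already developed for this anisotropic operator and then to upgrade the weak formulation \eqref{eq:weakDoublyCritical} to a pointwise identity by a distributional integration by parts.

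For part $(i)$, I would first recall from \cite{CFV} that every weak solution in the sense of Definition \ref{def:sol} enjoys $u\in C^{1,\alpha}(\Omega)\cap C^2(\Omega\setminus\mathcal Z)$, with $\mathcal Z$ the critical set \eqref{eq:criticalsetZ}. The decisive input is then the second-order estimate of \cite{CSR}, which gives precisely \eqref{eq:W12}, namely that the stress field $\mathcal V:=B'(H(\nabla u))\nabla H(\nabla u)$ lies in $W^{1,2}_{\rm loc}(\Omega)$. Once this is granted, the distributional divergence of $\mathcal V$ coincides with its Sobolev divergence, which is an $L^2_{\rm loc}(\Omega)$ function; hence for every $\varphi\in C^\infty_c(\Omega)$ one may integrate by parts,
\begin{equation*}
\int_\Omega B'(H(\nabla u))\langle\nabla H(\nabla u),\nabla\varphi\rangle\,dx=-\int_\Omega\operatorname{div}(\mathcal V)\,\varphi\,dx.
\end{equation*}
Comparing with \eqref{eq:weakDoublyCritical} yields $\int_\Omega(\operatorname{div}(\mathcal V)+g(x,u))\varphi\,dx=0$ for all test functions, and \eqref{eq:strongsol} follows from the fundamental lemma of the calculus of variations.

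For part $(ii)$, the idea is to transfer the anisotropic bounds to the Euclidean gradient by means of the structural assumptions. Since $(h_H)$ forces $H(\xi)\simeq|\xi|$ with $\nabla H$ bounded and $0$-homogeneous, while $(h_B)$(iii) with $\kappa=0$ gives $B'(t)\simeq t^{p-1}$ and $B''(t)\simeq t^{p-2}$, the uniform ellipticity \eqref{eq:boyuH} lets one compare $\mathcal V$ with the $p$-Laplacian field $|\nabla u|^{p-2}\nabla u$ and with the scalar $|\nabla u|^{p-1}$; the membership \eqref{eq:W12} then propagates to $|\nabla u|^{p-2}\nabla u,\ |\nabla u|^{p-1}\in W^{1,2}_{\rm loc}(\Omega)$, and differentiating these by the chain rule (licit on $\Omega\setminus\mathcal Z$ and extended across $\mathcal Z$ by the resulting $L^2$ bound) exhibits the second distributional derivatives of $u$, together with the weighted Hessian estimate $|\nabla u|^{p-2}|D^2u|\in L^2_{\rm loc}(\Omega)$.

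Finally, to convert this weighted estimate into unweighted Sobolev bounds I would write $D^2u=|\nabla u|^{-(p-2)}\big(|\nabla u|^{p-2}D^2u\big)$ and apply Hölder's inequality, in the spirit of the optimal-summability results of \cite{CSR}. In the singular/subquadratic range $1<p\le 2$ the factor $|\nabla u|^{-(p-2)}$ is a nonnegative power of the (bounded) gradient, so $u\in W^{2,2}_{\rm loc}$ is immediate, and the whole range $1<p<3$ is covered by the estimates of \cite{CSR}. When $p\ge 3$ the negative power of $|\nabla u|$ is genuinely singular on $\mathcal Z$, and this is the step I expect to be the main obstacle: it should be resolved by exploiting $g(x,s)>0$, which through \eqref{eq:strongsol} prevents the operator from vanishing and thereby controls the size of the sublevel sets of $|\nabla u|$, furnishing the integrability of $|\nabla u|^{-(p-2)}$ needed to close the Hölder estimate and yielding $u\in W^{2,m}_{\rm loc}(\Omega)$ for every $1\le m<(p-1)/(p-2)$. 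All the delicate points are thus localized on the critical set $\mathcal Z$, where the ellipticity of the operator degenerates or becomes singular.
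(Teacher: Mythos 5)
Your endgame for part $(i)$ --- once $\mathcal V:=B'(H(\nabla u))\nabla H(\nabla u)\in W^{1,2}_{\rm loc}(\Omega)$ is known, integrate by parts in \eqref{eq:weakDoublyCritical} and invoke the fundamental lemma to get \eqref{eq:strongsol} --- is exactly the paper's. The gap is that you treat the key assertion \eqref{eq:W12} as a citation: you say the estimates of \cite{CSR} ``give precisely \eqref{eq:W12}''. That is not how the paper proceeds, and for good reason: what is imported from \cite{CSR} (and \cite{MS}) is only the weighted Hessian bound \eqref{eq:localH}, i.e.\ $\int_{E\setminus\mathcal Z}(\kappa+|\nabla u|)^{p-2-\beta}\|D^2u\|^2\,dx<\infty$, valid under the general hypotheses $(h_g)$, $(h_B)$, $(h_H)$ of this paper (general $B$, source $g(x,u)$, $\kappa\in[0,1]$). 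The whole technical content of the paper's proof of $(i)$ is the upgrade from \eqref{eq:localH} to \eqref{eq:W12}: one truncates the stress field away from the critical set by setting $\varphi_n=B'(H(\nabla u))H_{\xi_j}(\nabla u)h_n(|\nabla u|)$ with $h_n$ vanishing near $\mathcal Z$ (so that $\varphi_n$ is differentiable, $u$ being $C^2$ only off $\mathcal Z$), splits $\partial_{x_i}\varphi_n$ into three terms, bounds each in $L^2(E)$ uniformly in $n$ using the homogeneity of $H$ (which gives $|\nabla_\xi H|\le M$ and $|D^2H(\xi)|\le M/|\xi|$), the growth conditions in $(h_B)$-(iii), the bound $|\nabla u|\,h_n'(|\nabla u|)\le 2$, and \eqref{eq:localH}, and then passes to the limit by weak compactness in $W^{1,2}(E)$. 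If you simply cite \cite{CSR} for \eqref{eq:W12} itself you are assuming a statement that the reference does not provide in this generality; you should either reproduce this truncation argument or at least make clear that the only input you take from the literature is \eqref{eq:localH}.

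For part $(ii)$ your sketch (chain rule off $\mathcal Z$, H\"older with the weight $|\nabla u|^{-(p-2)}$, and the use of $g>0$ to gain integrability of negative powers of $|\nabla u|$ when $p\ge 3$) is in the right spirit, and the paper itself only says that $(ii)$ ``follows exploiting arguments contained in \cite{DS1}'', so no detailed comparison is possible there. Two small cautions: pointwise comparability of $\mathcal V$ with $|\nabla u|^{p-2}\nabla u$ does not by itself ``propagate'' $W^{1,2}$ membership --- one has to rerun the truncation argument for the Euclidean field --- and your reduction of the case $2<p<3$ to boundedness of $|\nabla u|^{-(p-2)}$ really requires the extra exponent $\beta>0$ in \eqref{eq:localH}, not just the $\beta=0$ estimate you derive.
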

\begin{proof} We prove part $(i)$ of the statement.
By \cite{CFV}, a $C^1$ weak solution $u$ to \eqref{eq:p}, is actually  regular outside the critical set $\mathcal Z$, see \eqref{eq:criticalsetZ}. That is $u\in C^2(\Omega \setminus \mathcal Z)$.  In what follows,  we denote by $\nabla u_{x_i}$ and $u_{x_ix_j}$ the second derivatives of $u$ outside the set $Z$. We mean them extended to zero on $Z$.
At the end of the proof we will see that these derivatives coincide with the distributional ones in $\Omega$.

For all $n>0$, let us define  $J_n:\mathbb{R}^+_0\rightarrow\mathbb{R}$ by setting
\begin{equation}\label{eq:J}
J_n(t)=\begin{cases}
t & \text{if  $\displaystyle t\geq \frac{2}{n}$} \\
\displaystyle2t-\frac{2}{n}& \text{if $\displaystyle\frac{1}{n}\leq t\leq\frac{2}{n}$}
\\ 0 & \text{if $\displaystyle0\leq t\leq \frac{1}{n}$}.
\end{cases}
\end{equation}
and let $h_n:\mathbb{R}^+_0\rightarrow\mathbb{R}$ defined as
\begin{equation}\label{eq:hn}
h_n(t)=\frac{J_n(t)}{t} \quad \text{and} \quad h_n(0)=0.
\end{equation}
Let us set
\[\varphi_n(x)=B'(H(\nabla u))H_{\xi_j}(\nabla u)h_n(|\nabla u|).\]
Using the definition of  $h_n$ (the function $h_n$ is actually zero in a neighborhood of $\mathcal Z$) we have that since $u\in C^2(\Omega \setminus \mathcal Z)$, then $h_n\in W^{1,2}_{\rm loc}(\Omega)$. Moreover
\begin{eqnarray}\label{eq:phin}
\frac{\partial \varphi_n}{\partial{x_i}}&=&
B''(H(\nabla u))\langle\nabla_{\xi} H(\nabla u),\nabla u_{x_i} \rangle H_{\xi_j}(\nabla u)h_n(|\nabla u|) \\\nonumber
&+&B'(H(\nabla u))\langle \nabla_\xi H_{\xi_j}(\nabla u), \nabla u_{x_i}\rangle h_n(|\nabla u|) \\\nonumber
&+&(B'(H(\nabla u))H_{\xi_j}(\nabla u))h'_n(|\nabla u|)\frac{\langle\nabla u,\nabla u_{x_i}\rangle}{|\nabla u|}\\\nonumber
&=&A_{n,1}+A_{n,2}+A_{n,3}.
\end{eqnarray}
for $i=\{1,\ldots,N\}$ that is well defined since $u\in C^2(\Omega \setminus \mathcal Z)$. Now we want to show that, for $E\subset\subset \Omega$, it holds that $\|\varphi_n\|_{W^{1,2}(E)}\leq C$, for some positive constant $C=C(E)$ not depending on $n$. To do this in the following we will use a local weighted estimate for the hessian of the solution $u$.
Under our hypotheses, for all $E\subset\subset \Omega$ and $0\leq \kappa\leq 1$,  a solution $u$ to \eqref{eq:p}   satisfies
the estimate
\begin{equation}\label{eq:localH}
\int_{E\setminus Z}(\kappa+|\nabla u|)^{p-2-\beta}\|D^2u\|^2\,dx,
\end{equation}
with  $\beta\in [0, 1)$ and where $D^2 u$ denotes the hessian of the solution $u$. The estimates we use here, are contained in the regularity results proved in \cite{CSR} and in a  forthcoming paper \cite{MS} in a more general setting.

First of all recalling that $H$ is 1-homogeneous, we have that $\nabla_{\xi}H$ is 0-homogeneous and therefore we obtain
\[\nabla_\xi H(\xi)=\nabla_\xi H\left(|\xi|\frac{\xi}{|\xi|}\right)=\nabla_\xi H\left(\frac{\xi}{|\xi|}\right), \]
for all $\xi\in \mathbb R^N\setminus \{0\}$.
For a similar argument, we get the next  estimate for the hessian of $H$, i.e.
\[D^2  H(\xi)=D^2  H\left(|\xi|\frac{\xi}{|\xi|}\right)=\frac{1}{|\xi|}\ D^2 H\left(\frac{\xi}{|\xi|}\right), \]
for all $\xi\in \mathbb R^N\setminus \{0\}$.

Then there exists a positive constant $M$ such that $|\nabla_\xi H(\xi)|\leq M$, $|D^2 H(\xi)|\leq M/ |\xi|$ for all $\xi\in \mathbb R^N\setminus \{0\}$, since $\nabla_\xi H,D^2 H$ are  continuous. We will use such estimates in the following computations.

Hence,
by $(h_B)$-$(iii)$ and the fact that $H(\xi)$ is a norm equivalent to the euclidian one, i.e.  there exist $c_1,c_2>0$ such that
\begin{equation}\label{eq:normequivbag}
c_1|\xi|\leq H(\xi)\leq c_2|\xi|,
\end{equation}
 we obtain that
\begin{eqnarray}\label{est1}
A_{n,1}^2&=&\big(B''(H(\nabla u))\langle\nabla_{\xi} H(\nabla u),\nabla u_i \rangle H_{\xi_j}(\nabla u)h_n(|\nabla u|)\big)^2\\\nonumber
&\leq&C (\kappa+|\nabla u|)^{2(p-2)}\|D^2u\|^2\chi_{E\setminus \mathcal Z},
\end{eqnarray}
where we used that $h_n(t)\leq1$ for all $t\geq 0$, where $\chi_{A}$ is the characteristic function of a measurable set $A$.

For $1<p<2$ we also have
\begin{equation}\label{eq:casep<2}
\int_{E}(\kappa+|\nabla u|)^{2(p-2)}\|D^2u\|^2\chi_{E\setminus \mathcal Z}\, dx\leq \|(\kappa+|\nabla u|)\|_{L^{\infty}(\Omega)}^{p-2+\beta} \int_{E\setminus \mathcal Z}(\kappa+|\nabla u|)^{p-2-\beta}\|D^2u\|^2\, dx.
\end{equation}
For $p\geq 2$ we have
\begin{equation}\label{eq:casep>2}
\int_{E}(\kappa+|\nabla u|)^{2(p-2)}\|D^2u\|^2\chi_{E\setminus \mathcal Z}\, dx\leq C(\kappa,p,\|\nabla u\|_{L^{\infty}(\Omega)})\|_{L^{\infty}(\Omega)}\int_{E\setminus \mathcal Z}(\kappa+|\nabla u|)^{p-2}\|D^2u\|^2\, dx.\end{equation}
Therefore using \eqref{eq:localH}, from \eqref{eq:casep<2} and \eqref{eq:casep>2} we deduce that $\|A_{n,1}\|_{L^2(E)}\leq C$, with $C$ not depending on $n$.

The second term $A_{n,2}$ can be estimated as follows
\begin{eqnarray}\label{est2}
&&A^2_{n,2}=\big (B'(H(\nabla u))\langle \nabla_\xi H_{\xi_j}(\nabla u), \nabla u_{x_i}\rangle h_n(|\nabla u|)\big)^2\\\nonumber
&&\leq\big (B'(H(\nabla u)) |\nabla_\xi H_{\xi_j}(\nabla u)| |\nabla u_{x_i}| h_n(|\nabla u|)\big)^2\\\nonumber
&&\leq C(\kappa +|\nabla u|)^{2(p-2)}|\nabla u|^2|\nabla u|^{-2}\|D^2u\|^2\chi_{E\setminus \mathcal Z}
\\\nonumber&&\leq (\kappa +|\nabla u|)^{2(p-2)}\|D^2u\|^2\chi_{E\setminus \mathcal Z}.
\end{eqnarray}
As for the case of $A_{n,1}$, we get that $\|A_{n,2}\|_{L^2(E)}\leq C$.

For the last term $A_{n,3}$ we have have the following
\begin{eqnarray}\label{est3-1}
&&A^2_{n,3}=\left((B'(H(\nabla u))H_{\xi_j}(\nabla u))h'_n(|\nabla u|)\frac{\langle\nabla u,\nabla u_{x_i}\rangle}{|\nabla u|}\right)^2\\\nonumber
&&\leq C(\kappa +|\nabla u|)^{2(p-2)}|\nabla u|^2(h'_n(|\nabla u|))^2\|D^2u\|^2\chi_{E\setminus \mathcal Z}.
\end{eqnarray}
Exploiting \eqref{eq:hn}, by a straightforward  computation
$$
h'_n(t)=
\begin{cases}
 0 & \text{if $\displaystyle t > \frac{2}{n} $}
\\
\displaystyle\frac{2}{nt^2}& \text{if $\displaystyle  \frac{1}{n} < t< \frac{2}{n} $}
\\ 0 & \text{if $\displaystyle 0\leq t< \frac{1}{n} $},
\end{cases}
$$
and then we have   $|\nabla u|h_n'(|\nabla u|)\leq 2$. Therefore from \eqref{est3-1}
\begin{eqnarray}\label{est3}
&&A^2_{n,3}\leq C(\kappa +|\nabla u|)^{2(p-2)}(|\nabla u|h'_n(|\nabla u|))^2\|D^2u\|^2\chi_{E\setminus \mathcal Z}\\\nonumber
&&\leq C(\kappa +|\nabla u|)^{2(p-2)}\|D^2u\|^2\chi_{E\setminus \mathcal Z}
\end{eqnarray}
and therefore, as above, we deduce that
$\|A_{n,3}\|_{L^2(E)}\leq C$.
Hence taking in to account the previous estimates from \eqref{eq:phin} we get
\begin{equation}\label{eq:phiw12}
\|\varphi_n\|_{W^{1,2}(E)}\leq C,
\end{equation}
with $C$ not depending on $n$. Therefore $\varphi_n \rightharpoonup \varphi_n\in W^{1,2}(E)$. Moreover because of the compact embedding in $L^2(E)$, up to a subsequence, $\varphi_n\rightarrow\varphi$ a.e. in $E$. On the other hand
\[\varphi_n \rightarrow B'(H(\nabla u))H_{\xi_j}(\nabla u)\]
and hence \[ B'(H(\nabla u))H_{\xi_j}(\nabla u)\equiv \varphi\in W^{1,2}(E).\]
Since $\xi_j$, $j=1,\cdots, N$ is arbitrary, we  have that $B'(H(\nabla u))\nabla_{\xi}H(\nabla u)\in W^{1,2}(E,\mathbb R^N)$, namely \eqref{eq:W12}.

Once we have  \eqref{eq:W12} integrating by parts  the left hand side of \eqref{eq:weakDoublyCritical} we get \eqref{eq:strongsol}.

\

The part $(ii)$ of the statement  follows   exploiting   arguments contained in \cite{DS1}. So we skip it.
\end{proof}
\begin{rem} Note that, if the nonlinearity  $g$ allows to exploit the Hopf Boundary Lemma (see \cite[Theorem 4.5]{CSR}), then the regularity results of Proposition~\ref{pro:a.e.}  can be extended to the clousure of $\Omega$. Moreover if $g(x,s)>0$  the critical set $\mathcal Z$ has zero Lebesgue measure, that is $|\mathcal Z|=0$. In this case the second distributional $u_{x_ix_j}$ coincide a.e. with the classical ones.
\end{rem}
In the proof of Theorem \ref{thm:pohozaev} we  shall use a very fine version of the divergence theorem  \cite[Lemma A.1.]{CT}. For the reader's convenience we state it.
\begin{lem}\label{lem:divthm}
Let $\Omega$ be a bounded domain in $\mathbb R^N$ with a $C^2$-boundary $\partial \Omega$. Assume that ${\bf a}\,:\, \overline \Omega \rightarrow \mathbb R^N $ satisfies ${\bf a}\in [C^0(\overline \Omega)]^N$ and $\operatorname{div}{\bf a}=f\in L^1(\Omega) $ in the sense of distributions in $\Omega$. Then we have
\[\int_{\partial \Omega}{\bf a}\cdot \eta\, ds=\int_{\Omega}f(x)\, dx.\]
\end{lem}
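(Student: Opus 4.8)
The plan is to prove this refined divergence theorem by testing the distributional identity $\operatorname{div}\mathbf{a}=f$ against a family of cutoff functions that approximate the constant $1$ while concentrating their gradients in a thin collar around $\partial\Omega$. Since $\mathbf{a}$ is only continuous up to the boundary and $f$ is merely $L^1$, the classical $C^1$ divergence theorem is unavailable, so the entire argument rests on a boundary-layer analysis made possible by the $C^2$ regularity of $\partial\Omega$.

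First I would introduce the distance function $d(x)=\operatorname{dist}(x,\partial\Omega)$. Because $\partial\Omega$ is $C^2$, $d$ is $C^2$ on a collar neighborhood $\{0<d<d_0\}$ with $|\nabla d|\equiv 1$ there, and $\nabla d$ extends continuously to $\partial\Omega$ with $\nabla d=-\eta$ on $\partial\Omega$. I would fix a smooth increasing profile $\zeta:[0,\infty)\to[0,1]$ with $\zeta\equiv 0$ on $[0,1]$ and $\zeta\equiv 1$ on $[2,\infty)$, and set $\varphi_\delta(x)=\zeta(d(x)/\delta)$. For small $\delta$ this is $C^2_c(\Omega)$ (equal to $1$ on $\{d\ge 2\delta\}$, vanishing on $\{d\le\delta\}$), so the distributional identity extends to it by density of $C^\infty_c(\Omega)$; its gradient $\nabla\varphi_\delta=\delta^{-1}\zeta'(d/\delta)\nabla d$ is supported in the collar $\{\delta\le d\le 2\delta\}$.

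Testing $\operatorname{div}\mathbf{a}=f$ against $\varphi_\delta$ gives $\int_\Omega \mathbf{a}\cdot\nabla\varphi_\delta\,dx=-\int_\Omega f\,\varphi_\delta\,dx$. On the right, dominated convergence (using $0\le\varphi_\delta\le 1$, $\varphi_\delta\to 1$ a.e., $f\in L^1$) yields $-\int_\Omega f\,dx$ as $\delta\to 0$. On the left, I would apply the coarea formula (with $|\nabla d|=1$) and the substitution $t=\delta s$ to obtain
\[
\int_\Omega \mathbf{a}\cdot\nabla\varphi_\delta\,dx=\int_0^\infty \zeta'(s)\left(\int_{\{d=\delta s\}}\mathbf{a}\cdot\nabla d\,\,d\mathcal H^{N-1}\right)ds.
\]
For each fixed $s$, the continuity of $\mathbf{a}$ up to $\partial\Omega$, the convergence of the level set $\{d=\delta s\}$ to $\partial\Omega$ inside the $C^2$ collar, and $\nabla d\to-\eta$ force the inner integral to converge to $-\int_{\partial\Omega}\mathbf{a}\cdot\eta\,ds$, uniformly for $s$ in the compact support $[1,2]$ of $\zeta'$. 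Dominated convergence in $s$ together with $\int_0^\infty\zeta'(s)\,ds=1$ then gives the left-hand limit $-\int_{\partial\Omega}\mathbf{a}\cdot\eta\,ds$. Equating the two limits produces exactly $\int_{\partial\Omega}\mathbf{a}\cdot\eta\,ds=\int_\Omega f\,dx$.

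The main obstacle is this boundary-layer limit of the inner surface integral: one must bound the surface measures $\mathcal H^{N-1}(\{d=\delta s\})$ uniformly in $\delta$ and pass the continuity of both $\mathbf{a}$ and $\nabla d$ to the limit. This is precisely where the $C^2$ hypothesis on $\partial\Omega$ is used, through the collar diffeomorphism $(y,t)\mapsto y+t\,\nabla d(y)$ whose Jacobian tends to $1$ as $t\to 0$; the continuity of $\mathbf{a}$ on the compact set $\overline\Omega$ then supplies the uniform bound needed for the dominated convergence in $s$. The remaining estimates are routine.
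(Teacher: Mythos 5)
Your proof is essentially correct, but note that the paper itself does not prove this lemma at all: it is imported verbatim from Cuesta--Tak\'a\v{c} \cite{CT}, Lemma A.1, and stated only for the reader's convenience, so there is no in-paper argument to compare against. Your cutoff-function strategy is the standard self-contained route and all the key points check out: the sign conventions are consistent ($\int_\Omega \mathbf{a}\cdot\nabla\varphi_\delta\,dx=-\int_\Omega f\varphi_\delta\,dx$, with $\nabla d=-\eta$ on $\partial\Omega$ since $d$ is the interior distance and $\eta$ is outward); the choice $\zeta\equiv 1$ on $[2,\infty)$ quietly but correctly avoids the cut locus, where $d$ fails to be smooth, so $\varphi_\delta$ is genuinely $C^1$ with compact support in $\Omega$ once $2\delta<d_0$; the coarea computation with $|\nabla d|\equiv 1$ and the substitution $t=\delta s$ are right; and the boundary-layer limit works exactly as you describe via the normal map $\Phi_t(y)=y+t\nabla d(y)$, whose tangential Jacobian $\prod_{i}(1-t\kappa_i(y))\to 1$ uniformly because the principal curvatures are continuous, hence bounded, on the compact $C^2$ boundary. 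Two small points would tighten the write-up: first, cite the regularity of the distance function ($\partial\Omega\in C^2$ implies $d\in C^2$ in a one-sided collar, e.g.\ \cite[Lemma 14.16]{GT}) rather than asserting it; second, make explicit that $\nabla d(\Phi_t(y))=\nabla d(y)=-\eta(y)$, i.e.\ $\nabla d$ is constant along normal segments, since this is what lets the integrand on $\{d=\delta s\}$ converge uniformly to $-\mathbf{a}(y)\cdot\eta(y)$ using only uniform continuity of $\mathbf{a}$ on $\overline\Omega$; with those, the dominated convergence in $s$ over the support $[1,2]$ of $\zeta'$ and the normalization $\int_0^\infty\zeta'(s)\,ds=1$ close the proof. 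Your argument has the mild advantage over mollification-based proofs (approximating $\mathbf{a}$ by smooth fields on shrunken domains $\{d>\delta\}$ and applying the classical Gauss--Green theorem there, which is the flavor of the original \cite{CT} appendix) that the vector field $\mathbf{a}$ is never regularized, only the test function, so no convergence of $\operatorname{div}\mathbf{a}_\varepsilon$ needs to be tracked.
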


\begin{proof}[{Proof of Theorem~\ref{thm:pohozaev}}]
By Proposition \ref{pro:a.e.} we have that
\[-\operatorname{div}(B'(H(\nabla u))\nabla H(\nabla u))=g(x, u)\quad  \text{a.e. in} \,\,\Omega.\] Thus using the multiplier $(x\cdot \nabla u)$ in  both side of this equation and integrating we obtain
\begin{equation}\label{eq:campeur}
\int_{\Omega}\operatorname{div}(B'(H(\nabla u))\nabla H(\nabla u))(x\cdot \nabla u)\, dx=\int_{\Omega}-g(x,u)(x\cdot \nabla u)\, dx.
\end{equation}
Recalling \eqref{eq:G} we note now that
\begin{eqnarray}\label{eq:G1}
 &&-\int_{\Omega}g(x,u)(x\cdot \nabla u)\, dx=\sum_{i=1}^N\int_{\Omega}-x_i \partial_{x_i}G(x,u)\, dx+\sum_{i=1}^N\int_{\Omega} x_i, G_{x_i}(x,u)\, dx
 \\\nonumber&&=N\int_{\Omega}G(x,u)\, dx
-\int_{\partial \Omega}G(x,u)(x\cdot \eta)\, ds +\int_{\Omega} (x\cdot \nabla_x G(x,u))\, dx,
\end{eqnarray}
where in the last line we have used the divergence theorem, Lemma \ref{lem:divthm}.

We point out that integration in  the left hand side of \eqref{eq:campeur}, in particular the  divergence theorem, can  be applied thanks to the regularity result of the Proposition \ref{pro:a.e.}, in particular~\eqref{eq:W12}. Hence, taking also into account the   Euler's theorem for homogeneous functions, we obtain
\begin{eqnarray}\label{eq:cine1}
&&\quad \int_{\Omega}\operatorname{div}(B'(H(\nabla u))\nabla H(\nabla u))(x\cdot \nabla u)\, dx\\\nonumber
&&=-\int_{\Omega}B'(H(\nabla u))\langle \nabla H(\nabla u),\nabla (x\cdot \nabla u)\rangle \, dx+\int_{\partial \Omega}B'(H(\nabla u))(x\cdot \nabla u) \nabla H(\nabla u)\cdot \eta\, ds\\\nonumber
&&=-\int_{\Omega}B'(H(\nabla u))H(\nabla u) \, dx-\sum_{i=1}^N\int_{\Omega}B'(H(\nabla u))\langle \nabla H(\nabla u), \nabla u_{x_i}\rangle x_i\, dx\\\nonumber
&&+ \int_{\partial \Omega}B'(H(\nabla u))(x\cdot \nabla u) \nabla H(\nabla u)\cdot \eta\, ds\\\nonumber
&&=-\int_{\Omega}B'(H(\nabla u))H(\nabla u) \, dx-\sum_{i=1}^N\int_{\Omega}\partial_{x_i}B(H(\nabla u)) x_i\, dx\\\nonumber
&&+ \int_{\partial \Omega}B'(H(\nabla u))(x\cdot \nabla u) \nabla H(\nabla u)\cdot \eta\, ds\\\nonumber
&&=-\int_{\Omega}B'(H(\nabla u))H(\nabla u) \, dx+N\int_{\Omega}B(H(\nabla u)) \, dx\\\nonumber
&&-\int_{\partial \Omega} B(H(\nabla u))(x\cdot\eta)\, ds + \int_{\partial \Omega}B'(H(\nabla u))(x\cdot \nabla u) \nabla H(\nabla u)\cdot \eta\, ds,
\end{eqnarray}
where in the last line we applied the divergence theorem one more time.
From \eqref{eq:campeur}, collecting together \eqref{eq:G1} and \eqref{eq:cine1} we get
\begin{eqnarray*}
 &&N\int_{\Omega}G(u)\, dx +\int_{\Omega} (x\cdot \nabla_x G(x,u))\, dx -N\int_{\Omega}B(H(\nabla u)) \, dx +\int_{\Omega}B'(H(\nabla u))H(\nabla u) \, dx
\\\nonumber
&&=\int_{\partial \Omega}G(u)(x\cdot \eta)\, ds-\int_{\partial \Omega} B(H(\nabla u))(x\cdot\eta)\, ds + \int_{\partial \Omega}B'(H(\nabla u))(x\cdot \nabla u) \nabla H(\nabla u)\cdot \eta\, ds,
\end{eqnarray*}
namely our thesis.
\end{proof}
As well known, the domain it is called star-shaped with respect to the origin if for every $x\in \Omega$ the line segment $\overline{0x}$ is contained in $\Omega$. Moreover it is easy to see that $x\cdot\eta\geq 0$ on $\partial \Omega$ and that in particular
\begin{equation*}
x\cdot\eta>0\quad \text{on}\,\, \partial\Omega_1\subseteq\partial \Omega,
\end{equation*}
where $\partial \Omega_1$ is some subset of the boundary of positive measure. Finally  we call a domain  strictly star-shaped with respect to the origin,  if  $x\cdot\eta>0$ on the whole boundary  $\partial \Omega$.

Thanks to Theorem \ref{thm:pohozaev} we can state a general Pohozaev's non-existence type theorem in the spirit of \cite[Theorem 1]{pucser1}, for the anisotropic Finsler operator. We have the following
\begin{thm}[Pohozaev-Pucci-Serrin  non-existence  type theorem]\label{thm:PPS}
Let us assume that $(h_g)$, $(h_B)$ and $(h_H)$ hold and  that $\Omega$ is smooth, bounded and star-shaped with respect to the origin. Suppose that
\begin{eqnarray}\label{eq:shf1}
&&G(x,0)-B(H(\xi)) +B'(H(\xi))H(\xi)\geq0
\\\nonumber
&& \text{for all}\,\, x\in \partial \Omega\,\, \text{and}\,\, \xi\in\mathbb R^N
\end{eqnarray}
so that
\begin{equation}\label{eq:shf2}
\int_{\Omega}NG(x,u)  + (x\cdot \nabla_x G(x,u))-NB(H(\nabla u)) +B'(H(\nabla u))H(\nabla u)\, dx\geq 0.
\end{equation}
Finally assume that  \eqref{eq:shf2} implies either $u=0$ or $\nabla u=0$. Then the equation \eqref{eq:p} has no nontrivial weak solution $u\in C^1(\overline\Omega)$ with zero Dirichlet boundary condition.
\end{thm}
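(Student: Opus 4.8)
The plan is to feed the weak solution $u\in C^1(\overline\Omega)$ into the Pohozaev identity of Theorem~\ref{thm:pohozaev} and to show that the zero Dirichlet condition, together with star-shapedness and \eqref{eq:shf1}, forces the boundary integral in \eqref{eq:pohozaev} to be nonnegative; since by that identity the boundary integral equals the left-hand side of \eqref{eq:pohozaev}, this produces exactly \eqref{eq:shf2}, after which the structural assumption closes the argument. First I would record the two consequences of $u\equiv 0$ on $\partial\Omega$: the tangential part of the gradient vanishes there, so that $\nabla u=u_\eta\,\eta$ on $\partial\Omega$, and $G(x,u)=G(x,0)=0$ on $\partial\Omega$, which annihilates the first boundary term.

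The crux is the simplification of the third boundary term $B'(H(\nabla u))(x\cdot\nabla u)(\nabla H(\nabla u)\cdot\eta)$. Substituting $\nabla u=u_\eta\,\eta$ and using the positive $1$-homogeneity of $H$ gives $H(\nabla u)=|u_\eta|H(\eta)$; the $0$-homogeneity of $\nabla H$ gives $\nabla H(\nabla u)=\operatorname{sign}(u_\eta)\,\nabla H(\eta)$, and Euler's identity $\nabla H(\eta)\cdot\eta=H(\eta)$ then yields $\nabla H(\nabla u)\cdot\eta=\operatorname{sign}(u_\eta)H(\eta)$. Combining these with $x\cdot\nabla u=u_\eta\,(x\cdot\eta)$ and the cancellation $u_\eta\operatorname{sign}(u_\eta)=|u_\eta|$, the third integrand collapses to $B'(H(\nabla u))H(\nabla u)(x\cdot\eta)$, so the boundary integral reduces to
\[
\int_{\partial\Omega}\Big(G(x,0)-B(H(\nabla u))+B'(H(\nabla u))H(\nabla u)\Big)(x\cdot\eta)\,ds,
\]
where I keep the vanishing term $G(x,0)$ so as to match \eqref{eq:shf1} verbatim. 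I expect the main obstacle to be precisely this boundary bookkeeping: one must track the sign of $u_\eta$ carefully, since it is the cancellation $u_\eta\operatorname{sign}(u_\eta)=|u_\eta|$ that is responsible for the integrand acquiring a definite sign.

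Finally I would conclude by sign considerations. Evaluating \eqref{eq:shf1} at $\xi=\nabla u(x)$ shows the bracket above is nonnegative (this is moreover consistent with $(h_B)$, since convexity of $B$ with $B(0)=0$ already gives $B'(t)t\ge B(t)$), while star-shapedness with respect to the origin gives $x\cdot\eta\ge 0$ on $\partial\Omega$; hence the boundary integral is $\ge 0$. By Theorem~\ref{thm:pohozaev} the left-hand side of \eqref{eq:pohozaev} equals this integral, so \eqref{eq:shf2} holds for $u$. The assumption that \eqref{eq:shf2} forces $u=0$ or $\nabla u=0$ then applies: in the first case we are done, while in the case $\nabla u\equiv 0$ the function $u$ is constant on the connected set $\Omega$ and the boundary condition $u=0$ on $\partial\Omega$ forces $u\equiv 0$. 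Either way $u\equiv 0$, so \eqref{eq:p} admits no nontrivial weak solution $u\in C^1(\overline\Omega)$ with zero Dirichlet datum.
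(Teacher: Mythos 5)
Your proposal is correct and follows essentially the same route as the paper: apply Theorem~\ref{thm:pohozaev}, use $\nabla u=u_\eta\,\eta$ on $\partial\Omega$ together with the $1$-homogeneity of $H$, the $0$-homogeneity of $\nabla H$ and Euler's identity to collapse the boundary integral to $\int_{\partial\Omega}\big(G(x,0)-B(H(\nabla u))+B'(H(\nabla u))H(\nabla u)\big)(x\cdot\eta)\,ds$, then invoke star-shapedness and \eqref{eq:shf1} to get \eqref{eq:shf2} and conclude via the structural assumption. Your explicit tracking of $\operatorname{sign}(u_\eta)$ via $u_\eta\operatorname{sign}(u_\eta)=|u_\eta|$ is in fact slightly cleaner than the paper's displayed computation (which tacitly fixes the sign of $u_\eta$), and your final step (constancy on the star-shaped, hence connected, $\Omega$) replaces the paper's appeal to Stampacchia's theorem with an equivalent elementary argument.
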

\begin{proof}
Applying Theorem \ref{eq:pohozaev}, any weak $C_1$ solution to \eqref{eq:p} with zero Dirichlet boundary condition satisfies the following
\begin{eqnarray}\label{eq:shf3}
&& \int_{\Omega}NG(x,u)+ (x\cdot \nabla_x G(x,u))-NB(H(\nabla u)) +B'(H(\nabla u))H(\nabla u) \, dx
\\\nonumber
&&=\int_{\partial \Omega}G(x,0)(x\cdot \eta) -B(H(\nabla u))(x\cdot\eta) +B'(H(\nabla u))(x\cdot \nabla u) (\nabla H(\nabla u)\cdot \eta)\, ds.
\end{eqnarray}
Using the fact that $u=0$ on $\partial \Omega$ and therefore $\nabla u(x)=u_{\eta}(x)\eta$ there, we get
\begin{eqnarray}
&&\int_{\partial \Omega}B'(H(\nabla u))(x\cdot \nabla u) (\nabla H(\nabla u)\cdot \eta)\, ds\\\nonumber
&&=\int_{\partial \Omega}B'(H(\nabla u))(x\cdot \eta)u_{\eta} (\nabla H(u_{\eta}\eta)\cdot \eta)\, ds\\\nonumber
&&=-\int_{\partial \Omega}B'(H(\nabla u))(x\cdot \eta)u_{\eta} (\nabla H(\eta)\cdot \eta)\, ds\\\nonumber
&&=\int_{\partial \Omega}B'(H(\nabla u))H(\nabla u)(x\cdot \eta)\, ds,
\end{eqnarray}
where (see hypotheses $(h_H)$) we used Euler's theorem and that for  $s\in \mathbb R$ we have $\nabla H(s\xi)=\text{sign}(s) \nabla H(\xi)$ (i.e. $\nabla H$ is  absolutely $0$-homogeneous function).  Hence the right hand side of \eqref{eq:shf3} reads as
\[\int_{\partial \Omega}\big(G(x,0) -B(H(\nabla u))+B'(H(\nabla u))H(\nabla u)\big)(x\cdot \eta) \, ds.
\]
Then, since the domain is star-shaped, by \eqref{eq:shf1} the right hand side of \eqref{eq:shf3} is indeed non-negative and therefore \eqref{eq:shf2} holds true. By assumption,  either $u=0$ or $\nabla u=0$ in $\Omega$. Therefore in any case, by Stampacchia's theorem (see  for instance \cite[Lemma 7.7]{GT}) we deduce that $\nabla u=0$ a.e. in $\Omega$. Therefore, since $u=0$ on $\partial \Omega$ we have that  $u\equiv 0$ in $\Omega$.
\end{proof}
The hypothesis of Theorem \ref{thm:PPS}, namely  that  \eqref{eq:shf2} implies either $u=0$ or $\nabla u=0$ may not always be applicable. This is the case of the critical problem
\eqref{eq:critprob} here below: indeed,  for such a problem,  the left hand side of \eqref{eq:shf2}  is identically zero. Neverthless  as a  further and  important consequence of
  Theorem \ref{thm:pohozaev},  we get  the non existence of nontriavial solutions in bounded smooth star-shaped domain $\Omega$,  of the critical  anisotropic p-Laplacian problem, i.e.
\begin{equation}\label{eq:critprob}
\begin{cases}
-\Delta^H_p u=|u|^{p^*-2}u& \text {in}\,\, \Omega\\
u=0 & \text{on}\,\, \partial \Omega,
\end{cases}
\end{equation}
where $-\Delta^H_p$ is the anisotropic operator defined for suitable smooth functions $u$ by
\begin{equation}\label{eq:anisotropic}-\Delta_p^Hu:=-\operatorname{div}(H^{p-1}(\nabla u)\nabla H(\nabla u)),
\end{equation}
and $p^*=Np/(N-p)$, $1<p<N$.
We have
\begin{cor}\label{cor:pohozaev}Let us assume that $(h_g)$ and $(h_H)$ hold.  Let $\Omega$ a bounded smooth domain strictly star-shaped with respect to the origin.  Any $C^1(\overline\Omega)$ solution to
\begin{equation}\label{eq:probl}
\begin{cases}
-\Delta^H_p u=g(u)& \text {in}\,\, \Omega\\
u=0 & \text{on}\,\, \partial \Omega,
\end{cases}
\end{equation}
such that $\nabla u\not \equiv 0$ on $\partial \Omega$, satisfies
\begin{equation}\label{eq:pohozaev1}
N\int_{\Omega}G(u)\, dx-\frac{N-p}{p}\int_{\Omega}g(u)u \, dx>0.
\end{equation}

\

In particular, in the special case of  $g(u)=|u|^{m-1}u$, $m>0$ and  $1<p<N, $ then \eqref{eq:pohozaev1} still holds and we obtain
\begin{equation}\label{eq:pohozaev2}
\left(\frac{N}{m+1}-\frac{N-p}{p}\right)\int_{\Omega}|u|^{m+1} \, dx>0.
\end{equation}
Therefore $u\not\equiv 0$ in $\Omega$ implies
\[m<\frac{N(p-1)+p}{N-p}:=p^*-1.\]
\end{cor}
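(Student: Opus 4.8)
The plan is to specialize the Pohozaev identity of Theorem~\ref{thm:pohozaev} to the anisotropic $p$-Laplacian, where $B(t)=t^p/p$ and $g(x,u)=g(u)$ is independent of $x$, and then to combine it with the energy identity obtained by testing the equation against $u$ itself. With this choice one has $B(H(\nabla u))=\frac1p H^p(\nabla u)$ and $B'(H(\nabla u))H(\nabla u)=H^p(\nabla u)$, while $\nabla_x G(x,u)\equiv 0$. Hence the volume part of \eqref{eq:pohozaev} collapses to
\[
\int_\Omega NG(u)\,dx-\frac{N-p}{p}\int_\Omega H^p(\nabla u)\,dx,
\]
and it remains to evaluate the boundary integral and to replace $\int_\Omega H^p(\nabla u)\,dx$ by $\int_\Omega g(u)u\,dx$.

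First I would treat the boundary terms exactly as in the proof of Theorem~\ref{thm:PPS}. Since $u=0$ on $\partial\Omega$ the tangential gradient vanishes and $\nabla u=u_\eta\,\eta$ there; using the $0$-homogeneity of $\nabla H$ and Euler's theorem $\nabla H(\eta)\cdot\eta=H(\eta)$, the third boundary term rewrites as $\int_{\partial\Omega}H^p(\nabla u)(x\cdot\eta)\,ds$. The first boundary term vanishes because $G(0)=0$, and combining it with the second yields
\[
\int_{\partial\Omega}\frac{p-1}{p}\,H^p(\nabla u)\,(x\cdot\eta)\,ds.
\]
Here the hypotheses pay off: strict star-shapedness gives $x\cdot\eta>0$ on \emph{all} of $\partial\Omega$, the equivalence \eqref{eq:normequivbag} gives $H^p(\nabla u)\ge0$ with $H(\eta)>0$, and the assumption $\nabla u\not\equiv0$ on $\partial\Omega$ forces $u_\eta$ to be nonzero on a set of positive surface measure, so this integral is \emph{strictly} positive.

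Next I would produce the energy identity. Because $u\in C^1(\overline\Omega)$ with $u=0$ on $\partial\Omega$ we have $u\in W^{1,p}_0(\Omega)$ and all quantities are bounded on $\overline\Omega$, so $\varphi=u$ is an admissible test function in \eqref{eq:weakDoublyCritical} after a routine density argument. Using $B'(H(\nabla u))=H^{p-1}(\nabla u)$ and $\langle\nabla H(\nabla u),\nabla u\rangle=H(\nabla u)$ (Euler again) this gives $\int_\Omega H^p(\nabla u)\,dx=\int_\Omega g(u)u\,dx$. Substituting into the collapsed identity produces exactly \eqref{eq:pohozaev1}. Finally, for $g(u)=|u|^{m-1}u$ I would compute $g(u)u=|u|^{m+1}$ and $G(u)=|u|^{m+1}/(m+1)$, so \eqref{eq:pohozaev1} becomes \eqref{eq:pohozaev2}; since $u\not\equiv0$ implies $\int_\Omega|u|^{m+1}\,dx>0$, the coefficient must be positive, which rearranges to $m<p^*-1$.

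The main obstacle is not any single computation but rather the bookkeeping in the boundary term: one must correctly track the sign arising from $\nabla H(s\xi)=\mathrm{sign}(s)\,\nabla H(\xi)$, verify that the coefficient $\frac{p-1}{p}$ emerges, and then argue \emph{strict} positivity, which genuinely uses both strict star-shapedness and $\nabla u\not\equiv0$ on $\partial\Omega$ rather than the weaker hypotheses available in Theorem~\ref{thm:PPS}.
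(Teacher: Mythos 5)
Your proposal is correct and follows essentially the same route as the paper: specialize Theorem~\ref{thm:pohozaev} to $B(t)=t^p/p$, rewrite the boundary term via $\nabla u=u_\eta\eta$, the $0$-homogeneity of $\nabla H$ and Euler's theorem to obtain $\frac{p-1}{p}\int_{\partial\Omega}H^p(\nabla u)(x\cdot\eta)\,ds>0$, and then use $u$ as a test function to replace $\int_\Omega H^p(\nabla u)\,dx$ by $\int_\Omega g(u)u\,dx$. The final specialization to $g(u)=|u|^{m-1}u$ and the rearrangement to $m<p^*-1$ also match the paper's argument.
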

\begin{proof}
Using \eqref{eq:G} in the special case $g(x,t)=g(t)$ and  $B(t)=t^p/p$, $p>1$,
since $u=0$ on $\partial \Omega$, from  \eqref{eq:pohozaev} we obtain
\begin{eqnarray}\label{eq:pohozaev_pro}
&&\qquad N\int_{\Omega}G(u)\, dx-\frac{N}{p}\int_{\Omega}H^p(\nabla u) \, dx +\int_{\Omega}H^p(\nabla u) \, dx
\\\nonumber
&&=-\frac 1p\int_{\partial \Omega} H^p(\nabla u)(x\cdot\eta)\, ds + \int_{\partial \Omega}H^{p-1}(\nabla u)(x\cdot \nabla u) (\nabla H(\nabla u)\cdot \eta)\, ds.
\end{eqnarray}
Moreover, on the boundary $\partial \Omega$, ($u=0$ there)  we have that $\nabla u(x)=u_{\eta}(x)\eta$. Arguing as in the proof of Theorem \ref{thm:PPS},  the last term on the right hand side of \eqref{eq:pohozaev_pro} reads as
\begin{eqnarray}\label{eq:civuole}
&&\int_{\partial \Omega}H^{p-1}(\nabla u)(x\cdot \nabla u) (\nabla H(\nabla u)\cdot \eta)\, ds\\\nonumber
&&=\int_{\partial \Omega}H^{p-1}(u_{\eta}(x)\eta)(x\cdot u_{\eta}(x)\eta) (\nabla H(u_{\eta}(x)\eta)\cdot \eta)\, ds\\\nonumber
&&=-\int_{\partial \Omega}H^{p-1}(u_{\eta}(x)\eta))u_{\eta}(x)(x\cdot \eta) (\nabla H(\eta)\cdot \eta)\, ds
\\\nonumber
&&
=-\int_{\partial \Omega}H^{p-1}(u_{\eta}(x)\eta)u_{\eta}(x)(x\cdot \eta) H(\eta)\, ds\\\nonumber
&&=\int_{\partial \Omega}H^{p}(u_{\eta}(x)\eta))(x\cdot \eta)\, ds
\\\nonumber
&&=\int_{\partial \Omega}H^p(\nabla u)(x\cdot \eta)\, ds.
\end{eqnarray}
Therefore since $\Omega$ is strictly star-shaped with respect to the origin, i.e. $(x,\eta)>0$ on the boundary $\partial \Omega$, exploiting  \eqref{eq:civuole}, we deduce that the  right hand side of \eqref{eq:pohozaev_pro} is indeed positive, namely
\begin{eqnarray}\label{eq:rhs}
&&-\frac 1p\int_{\partial \Omega} H^p(\nabla u)(x\cdot\eta)\, ds + \int_{\partial \Omega}H^{p-1}(\nabla u)(x\cdot \nabla u) (\nabla H(\nabla u)\cdot \eta)\, ds\\\nonumber
&&=-\frac 1p\int_{\partial \Omega} H^p(\nabla u)(x\cdot\eta)\, ds  +\int_{\partial \Omega}H^p(\nabla u)(x\cdot \eta)\, ds\\\nonumber
 &&=\frac{p-1}{p}\int_{\partial \Omega}H^p(\nabla u)(x\cdot \eta)\, ds>0,
\end{eqnarray}
where in the last line we used assumptions $(h_H)$ and  that $\nabla u\not \equiv 0$ on $\partial \Omega$.

Using the solution $u$ as test function in the problem \eqref{eq:probl}, the right hand side of \eqref{eq:pohozaev_pro} becomes
\begin{eqnarray}\label{eq:rhspohoz}
&&N\int_{\Omega}G(u)\, dx-\frac{N}{p}\int_{\Omega}H^p(\nabla u) \, dx +\int_{\Omega}H^p(\nabla u) \, dx\\\nonumber
&&=N\int_{\Omega}G(u)\, dx-\frac{N-p}{p}\int_{\Omega}g(u)u \, dx.
\end{eqnarray}
Putting together  \eqref{eq:rhs} and \eqref{eq:rhspohoz} in \eqref{eq:pohozaev_pro} we get  \eqref{eq:pohozaev1}.

Readily  from \eqref{eq:pohozaev2},  we deduce that $u\not\equiv 0$ in $\Omega$ implies $0<m<p^*-1$.
\end{proof}
Finally we give  the
\begin{proof}[{Proof of Theorem~\ref{thm:R^N}}]
Using polar coordinates and \eqref{eq:radii}, we infer there exists  a sequence of radii $\{r_n\}$   such that  for $n\rightarrow+\infty$ we have
\begin{equation}\label{eq:radii2}
\lim_{n\rightarrow +\infty}r_n\int_{\partial B(0,r_n)}(B(H(\nabla u))+|G(x,u)|)\, dS=0.
\end{equation}
By Theorem \ref{thm:pohozaev} with $\Omega=B(0,r_n)$ we obtain
\begin{eqnarray}\label{eq:sianiua}\\\nonumber
&& \int_{B(0,r_n)}NG(x,u)- (x\cdot \nabla_x G(x,u))\, dx-\int_{B(0,r_n)}NB(H(\nabla u)) -B'(H(\nabla u))H(\nabla u) \, dx
\\\nonumber
&&=\int_{\partial B(0,r_n)}G(x,u)(x\cdot \eta)\, ds-\int_{\partial B(0,r_n)} B(H(\nabla u))(x\cdot\eta)\, ds \\\nonumber
&&+ \int_{\partial B(0,r_n)}B'(H(\nabla u))(x\cdot \nabla u) (\nabla H(\nabla u)\cdot \eta)\, ds.
\end{eqnarray}
Exploiting hypotheses in $(h_B)$ we deduce that $B'(t)t\leq CB(t)$ for all $t\geq 0$ and with  $C=C(p,\gamma,\Gamma)$  a positive constant.
Then, recalling also \eqref{eq:normequivbag} we deduce
\begin{eqnarray}\nonumber
&&\left|\int_{\partial B(0,r_n)}B'(H(\nabla u))(x\cdot \nabla u) (\nabla H(\nabla u)\cdot \eta)\, ds\right|\\\nonumber
&&\leq C r_n\int_{\partial B(0,r_n)}B'(H(\nabla u))H(\nabla u)\, ds\leq C r_n\int_{\partial B(0,r_n)}B(H(\nabla u))\, ds=o(1),
\end{eqnarray}
for $n\rightarrow +\infty$, because of \eqref{eq:radii2}.

The other terms in the right hand side of \eqref{eq:sianiua} clearly go to zero as well. Therefore, passing to the limit in \eqref{eq:sianiua}, by the dominate convergence theorem we end the proof.
\end{proof}

\vspace{1cm}

\begin{center}{\bf Acknowledgements}\end{center}  L. Montoro and B. Sciunzi are partially supported by PRIN project 2017JPCAPN (Italy): Qualitative and quantitative aspects of nonlinear PDEs, and L. Montoro by  Agencia Estatal de Investigación (Spain), project PDI2019-110712GB-100.

\

\begin{center}
{\sc Data availability statement}

\

All data generated or analyzed during this study are included in this published article.
\end{center}

\end{document}